\journal{Journal of \LaTeX\ Templates}
\newtheorem{definition }{\sc \bf Definition }
\newtheorem{definition}{Definition}
\DeclareMathOperator{\sech}{sech}
\def\fps@figure{hbtp}
\def\fps@table{hbtp}
\newtheorem{example}{\sc \bf Example}  
\newtheorem{theorem}{Theorem}
\newcommand{\proofofref}{}
\newproof{zproofof}{Proof of \proofofref}
\newtheorem{lemma}{\sc \bf Lemma}  
\begin{document}
\begin{frontmatter}
\title{Numerical approximation of nonlinear stochastic Volterra integral equation using Walsh function }

\author[addresss1]{Prit Pritam Paikaray}
\ead{paikaraypritpritam@gmail.com}
\author[addresss1]{Nigam Chandra Parida}
\ead{ncparida@gmail.com}
\author[addresss1]{Sanghamitra Beuria}
\ead{sbeuria108@gmail.com}
\cortext[mycorrespondingauthor]{Corresponding author}
\author[address2]{Omid Nikan}
\ead{omidnikan77@yahoo.com}
\address[addresss1]{\normalfont Department of Mathematics, College of Basic Science and Humanities, OUAT, Bhubaneswar, 751003, India}
\address[address2]{\normalfont School of Mathematics and Computer Science, Iran University of Science and Technology, Narmak, Tehran, Iran}

\begin{abstract}
This paper adopts a highly effective numerical approach for  approximating non-linear stochastic Volterra integral equations (NLSVIEs) based on the operational matrices of the Walsh function and the collocation method. The method transforms the integral equation into a system of algebraic equations, which allows for the derivation of an approximate solution. Error analysis has been performed, confirming the effectiveness of
the proposed method, which results in a linear order of convergence. Numerical examples are provided to  illustrate the precision and effectiveness of this proposed method.
 \end{abstract}

 \begin{keyword}
 Nonlinear stochastic Volterra integral equation\sep It$\hat{o}$ integral\sep Brownian motion\sep Walsh approximation\sep Lipschitz condition\sep  Collocation method.
 	
 	
 	\MSC[2010] 60H05\sep 65C30
 \end{keyword}
 	\end{frontmatter}
 	\section{Introduction} 
Non-linear stochastic Volterra integral equations (NLSVIEs) have found various applications in the biological sciences, particularly in modeling and simulating biological systems.  NLSVIEs have been used to model population growth and extinction in different species. For instance, they have been used to study the dynamics of predator-prey systems, where the population of predators and prey interact with each other in a non-linear way. NLSVIEs have been used to study the spread of infectious diseases in populations. In this case, the equations model the dynamics of the disease transmission, including the rate of infection and recovery, and the effect of different control measures.
NLSVIEs have been used to model the dynamics of neuronal networks and synaptic plasticity in the brain. In this case, the equations model the non-linear interactions between neurons and how they change over time \cite{Kloeden,Oksendal}. However, it is impossible to obtain exact solutions for all NLSVIEs, various numerical techniques are adopted to obtain approximate solutions. In particular,  numerical techniques including orthogonal functions are often  applied to solve these problems.
 Haar wavelet \cite{Jiang}, Orthogonal functions such as block pulse function (BPF) \cite{Asgari}, and Legendre polynomials \cite{Zeghdane} have been implemented to simulate the solution of NLSVIEs. The Walsh function has also been employed for solving the stochastic Volterra integral equation \cite{Paikaray}.

In this work,  we investigate the approximated solution  $x(t)$ of the NLSVIE by using of the  the Walsh function \cite{Walsh} as
 \begin{equation}\label{NLSIE}
	x(t)=x_0+\int_{0}^{t}k_1(s,t)\beta(x(s)){\rm d}s+\int_{0}^{t}k_2(s,t)\sigma(x(s)){\rm d}B(s)
\end{equation}
where $x(t)$ s an unknown function to be determined and $k_1(s,t)$, $k_2(s,t)$ for $s,t\in[0,T)$, represent the stochastic processes based on the same probability space $(\Omega,F,P)$. Here $B(t)$ and $\int_{0}^{t}k_2(s,t)x(s){\rm d}B(s)$ denote the Brownian motion  and  It$\hat{o}$ integral, respectively \cite{Kloeden,Oksendal}.

The rest of current research paper is organized as follows.  
Section \ref{Walsh} introduces  the  Walsh Function and its properties.
Section \ref{sect3} describes Relationship between Walsh Function and Block Pulse Functions (BPFs).
Section \ref{sect3a} presents a  numerical technique by based on the operational matrices of the Walsh function and the collocation method to discretize the NLSVIEs. 
Section \ref{sect4} discusses the convergence and error analysis of the method to demonstrate the method's validity and precision.
Section \ref{sect5} gives  two numerical  examples  by using  the proposed method  to demonstrate the efficacy of the method.
 Finally, Section \ref{sect6}  contains some concluding remarks and summarizes the main findings. 
\section{Walsh Function and its Properties}\label{Walsh}
\begin{definition}\normalfont
	Rademacher function $r_i(t)$, $i=1,2,\hdots$, for $t\in[0,1)$ is defined  by \cite{Walsh} 
	
	$$r_i(t)=
	\Biggl\{
	\begin{aligned}
		1 & \quad \text{ $i=0$},\\
		{\rm sgn}(\sin(2^i\pi t)) & \quad \text{otherwise}\\
	\end{aligned}$$
	where,
	$${\rm sgn}(x)=
	\Biggl\{
	\begin{aligned}
		1 & \quad \text{ $x>0$},\\
		0 & \quad \text{ $x=0$},\\
		-1 & \quad \text{$x<0$}.
	\end{aligned}$$
\end{definition}
\begin{definition}\normalfont 
	\normalfont The $n^{th}$ Walsh function for $n=0,1,2,\cdots,$ denoted by $w_n(t)$, $t\in[0,1)$ is defined \cite{Walsh} as 
	$$w_n(t)=(r_q(t))^{b_q}.(r_{q-1}(t))^{b_{q-1}}.(r_{q-2}(t))^{b_{q-2}}\hdots (r_{1}(t))^{b_1}$$ where $n=b_q2^{q-1}+b_{q-1}2^{q-2}+b_{q-2}2^{q-3}+\hdots +b_12^{0}$ is the binary expression of $n$. Therefore, $q$, the number of digits present in the binary expression of $n$ is calculated by $q=\big[\log_2n\big]+1$ in which $\big[\cdot\big]$ is the greatest integer less than or equal's to $'\cdot'$.
\end{definition}

The first $m$ Walsh functions for $m \in \mathbb{N}$ can be written as an $m$-vector by
$W(t)=\begin{bmatrix}
	w_0(t) & w_1(t) &w_2(t)\hdots w_{m-1}(t)
\end{bmatrix}^T$, $t\in[0,1)$. The Walsh functions satisfy the following properties.

\subsection*{Orthonormality}
The set of Walsh functions is orthonormal. i.e., 
$$\int_{0}^{1}w_i(t)w_j(t){\rm d}t=\Biggl\{\begin{aligned}
	1 & \quad {i=j,}\\
	0 & \quad \text{otherwise}.
\end{aligned}$$
\subsection*{Completeness}
\noindent For every $f\in L^2([0,1))$ 
\begin{equation*}
	\int_{0}^{1}f^2(t){\rm d}t=\sum\limits_{i=0}^{\infty}f_i^2\lvert\lvert w_i(t)\lvert\lvert^2
\end{equation*}
where $f_i=\int_{0}^{1}f(t)w_i(t){\rm d}t$.
\subsection*{Walsh Function Approximation}
Any real-valued function $f(t)\in L^2([0,1))$ can be approximated as 
$$f_m(t)\simeq \sum\limits_{i=0}^{m-1}c_iw_i(t)$$
where, $c_i=\int_{0}^{1}f(t)w_i(t){\rm d}t$.\\
The matrix form can be represented by
\begin{equation}
	f(t)\simeq F^TT_WW(t) \label{Eq:2}
\end{equation}where
$ F=
\begin{bmatrix}
	f_0 &f_1 &f_2 \hdots f_{m-1}
\end{bmatrix}^T$, 
$f_i=\int_{ih}^{(i+1)h}f(s){\rm d}s$.\\
Here, $T_W=[w_i(\eta_j)]$ is called as the Walsh operational matrix where $\eta_j\in [jh, (j+1)h)$.\\
Similarly, function $k(s,t)\in L^2([0,1)\times[0,1))$ can be approximated by
$$k_m(s,t)\simeq \sum\limits_{i=0}^{m-1}\sum\limits_{j=0}^{m-1}c_{ij}w_i(s)w_j(t)$$
where $c_{ij}=\int_{0}^{1}\int_{0}^{1}k(s,t)w_i(s)w_j(t){\rm d}t{\rm d}s$
with the matrix form represented by
\begin{equation}
	k(s,t)\simeq W^T(s)T_WKT_WW(t)=W^T(t)T_WK^TT_WW(s) \label{Eq:3}
\end{equation}
where $K=[k_{ij}]_{m\times m}, k_{ij}=\int_{ih}^{(i+1)h}\int_{jh}^{(j+1)h}k(s,t){\rm d}t{\rm d}s$.
\section{Relationship between Walsh Function and Block Pulse Functions (BPFs)} \label{sect3}
\begin{definition}\normalfont [Block Pulse Functions]
	For a fixed positive integer $m$, an $m$-set of BPFs $\phi_i(t), t\in [0,1)$ for $i=0, 1,..., m-1$ is defined as
	$$\phi_i(t)=\biggl\{
	\begin{aligned}
		1 & \quad \text{if $\frac{i}{m}\le t < \frac{(i+1)}{m}, \quad$}\\
		0 & \quad \text{ otherwise}
	\end{aligned}
	$$
in which $\phi_i$ is known as the $i$th BPF.
\end{definition}
The set of all $m$ BPFs can be written concisely as an $m$-vector,
$\Phi(t)=\begin{bmatrix}
	\phi_0(t) & \phi_1(t) &\phi_2(t)\hdots \phi_{m-1}(t)
\end{bmatrix}^T$, $t\in[0,1)$.
The BPFs are disjoint, complete, and orthogonal \cite{Ganti}.
The BPFs in vector form satisfy 
$$ \Phi(t)\Phi(t)^TX=\tilde{X}\Phi(t) \;\textrm{and}\; \Phi^T(t)A\Phi(t)=\hat{A}\Phi(t)$$
where $X \in \mathbb{R}^{m \times 1}, \tilde{X}$ 
is the $m\times m$ diagonal matrix with $\tilde{X}(i, i)=X(i) \,\textrm{for}\, i=1, 2, 3\cdots m, A\in \mathbb{R}^{m \times m}$ and  $\hat{A}=\begin{bmatrix}
	a_{11}&	a_{22}&\hdots 	&a_{mm}
\end{bmatrix}^T$
is the $m$-vector with elements equal to the diagonal entries of $A$.
The integration of BPF vector $\Phi(t)$, $t\in[0,1)$ can be performed by \cite{Hatamzadeh}
\begin{equation}
	\int_{0}^{t}\Phi(\tau){\rm d}\tau=P\Phi(t), t\in[0,1), 
\end{equation}
Consequently, the integral of every function $f(t)\in L^2[0,1)$ can be estimated as
$$\int_{0}^{t}f(s){\rm d}s=F^TP\Phi(t).$$
The integration of the BPF vector $\Phi(t)$, with $t\in[0,1)$, via the It$\hat{o}$ integral can be executed by \cite{Maleknejad}
	\begin{equation}
		\int_{0}^{t}\Phi(\tau){\rm d}B(\tau)=P_S\Phi(t), t\in[0,1)
	\end{equation}
Hence, the It$\hat{o}$ integral of every function $f(t)\in L^2[0,1)$ can be represented as $$\int_{0}^{t}f(s){\rm d}B(s)=F^TP_S\Phi(t).$$
The Next theorem elucidates a correlation between the Walsh function and the block pulse function.
	\begin{theorem}\normalfont\cite{Paikaray}
		Let the $m$-set of Walsh function and BPF vectors be $W(t)$ and $\Phi(t)$ respectively. Then the BPF vectors $\Phi(t)$ can be used to approximate $W(t)$ as $W(t)=T_W\Phi(t)$, $m=2^k$, and $k=0,1,\hdots $, where $T_W=\big[c_{ij}\big]_{m\times m}$, $c_{ij}=w_i(\eta_j)$, for some  $\eta_j=\big(\frac{j}{m},\frac{j+1}{m}\big)$ and $i,j=0,1,2,\hdots m-1$.
	\end{theorem}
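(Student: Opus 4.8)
The plan is to prove that, when $m=2^k$, every Walsh function $w_i$ with $0\le i\le m-1$ is constant on each dyadic subinterval $I_j=\big[\,j/m,\,(j+1)/m\,\big)$; once this piecewise-constancy is in hand, the matrix identity $W(t)=T_W\Phi(t)$ follows immediately by reading off those constant values.

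First I would use the definition $w_n=r_q^{\,b_q}r_{q-1}^{\,b_{q-1}}\cdots r_1^{\,b_1}$, where $n=\sum_{\ell=1}^{q}b_\ell 2^{\ell-1}$ and $q=[\log_2 n]+1$ (with the usual convention $w_0\equiv 1$). For $n\le m-1=2^k-1$ the binary expansion has at most $k$ digits, so only the Rademacher functions $r_1,\dots,r_k$ can occur in the product defining $w_n$. Next I would note that $r_\ell(t)=\mathrm{sgn}(\sin(2^\ell\pi t))$ vanishes only at the points $t=p/2^\ell$, $p\in\mathbb{Z}$, and equals $+1$ or $-1$ on each open interval lying strictly between two consecutive such points. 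For $\ell\le k$ these points are spaced $2^{-\ell}\ge 2^{-k}=1/m$ apart and are themselves integer multiples of $1/m$, so none of them lies in the open interval $(j/m,(j+1)/m)$; hence each $r_\ell$ with $\ell\le k$ is constant (indeed $\pm 1$) on that interval. A finite product of functions each constant on $(j/m,(j+1)/m)$ is again constant there, so $w_i$ is constant on $(j/m,(j+1)/m)$ for every $0\le i\le m-1$.

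Consequently, fixing any sample point $\eta_j\in(j/m,(j+1)/m)$, we get $w_i(t)=w_i(\eta_j)$ for all $t\in(j/m,(j+1)/m)$ and all $i$. Since the BPFs $\phi_0,\dots,\phi_{m-1}$ are disjoint and sum to $1$ on $[0,1)$, with $\phi_j$ supported on $I_j$, this yields $w_i(t)=\sum_{j=0}^{m-1}w_i(\eta_j)\,\phi_j(t)$ on $[0,1)$ (off the finitely many interval endpoints, which is all that matters for the $L^2$-level representations used in the sequel). Assembling these $m$ scalar identities for $i=0,1,\dots,m-1$ gives $W(t)=T_W\Phi(t)$ with $T_W=[c_{ij}]_{m\times m}$ and $c_{ij}=w_i(\eta_j)$, as claimed.

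The only delicate point I anticipate is the behaviour at the dyadic endpoints, where $\mathrm{sgn}(\sin(2^\ell\pi t))$ may be $0$ rather than $\pm 1$; choosing each $\eta_j$ in the open subinterval avoids this entirely, and the identity obtained is exact except on a finite (hence null) set, which is sufficient for every later use of the relation.
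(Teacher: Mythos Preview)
Your argument is correct. The key observation---that for $m=2^k$ each Walsh function $w_i$ with $0\le i\le m-1$ is a product of Rademacher functions $r_1,\dots,r_k$, each of which is constant $\pm 1$ on every open dyadic interval $(j/m,(j+1)/m)$---is exactly what is needed, and your handling of the endpoint issue via the choice of $\eta_j$ in the open interval is appropriate.

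As for comparison: the paper does not actually prove this theorem. It is stated with a citation to \cite{Paikaray} and used as a black box, so there is no in-paper argument against which to measure yours. Your proof supplies precisely the justification that the paper omits, and it is the standard route to this identity (piecewise constancy of low-index Walsh functions on the finest dyadic partition of level $k$). Nothing further is required.
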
	
	One can see that \cite{Cheng}  $$T_WT_W^T=mI \, \textrm{and} \, T_W^T=T_W$$ which implies that $\Phi(t)=\frac{1}{m}T_WW(t).$\\
	\begin{lemma}\normalfont\cite{Paikaray}[Integration of Walsh function]
		Suppose that $W(t)$ is a Walsh function vector, then the integral of $W(t)$ w.r.t. $t$ is given by \\
		$\int_{0}^{t}W(s){\rm d}s=\wedge W(t)$, where $\wedge =\frac{1}{m}T_WPT_W$ and $$P=\frac{1}{h}\begin{bmatrix}
			1 &2 &2&\hdots &2\\0 &1 &2&\hdots &2\\\vdots &\vdots &\vdots &\ddots &\vdots\\0 &0 &0&\hdots &1
		\end{bmatrix}$$
	\end{lemma}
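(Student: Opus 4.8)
The plan is to piggyback on the block pulse function (BPF) machinery already set up, exploiting the fact that the Walsh vector and the BPF vector are linked by the \emph{constant} invertible matrix $T_W$. First I would invoke the Theorem above to write $W(t)=T_W\Phi(t)$, and note that $T_W=[c_{ij}]$ with $c_{ij}=w_i(\eta_j)$ is independent of $t$. Hence, by linearity of the integral, $\int_{0}^{t}W(s)\,{\rm d}s=T_W\int_{0}^{t}\Phi(s)\,{\rm d}s$, which moves the whole question onto the BPF side.

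Next I would apply the BPF operational rule for integration, $\int_{0}^{t}\Phi(\tau)\,{\rm d}\tau=P\Phi(t)$, with $P$ the $\tfrac1h$-scaled upper-triangular matrix displayed in the statement; this gives $\int_{0}^{t}W(s)\,{\rm d}s=T_WP\Phi(t)$. The final step converts back from the BPF vector to the Walsh vector: from the identities $T_WT_W^{T}=mI$ and $T_W^{T}=T_W$ recorded after the Theorem we get $\Phi(t)=\tfrac1m T_WW(t)$, so that $\int_{0}^{t}W(s)\,{\rm d}s=\tfrac1m T_WPT_WW(t)=\wedge W(t)$, which is exactly the claimed formula with $\wedge=\tfrac1m T_WPT_W$. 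For completeness I would briefly recall why $\int_{0}^{t}\Phi(\tau)\,{\rm d}\tau=P\Phi(t)$: on the node subinterval $[jh,(j+1)h)$ the $i$th component $\int_{0}^{t}\phi_i(\tau)\,{\rm d}\tau$ equals $h$ for $i<j$, equals $t-jh$ (evaluated at the collocation point, hence $\approx h/2$) for $i=j$, and $0$ for $i>j$; assembling these values over $j=0,\dots,m-1$ reproduces the matrix $P$.

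I do not expect a genuine obstacle here, since the argument is essentially an algebraic substitution chain. The one point deserving care is that the BPF integration identity — and therefore the Walsh identity — is an \emph{operational} (i.e.\ approximate at the collocation nodes) relation rather than an exact one: the replacement of $t-jh$ by its nodal value $h/2$ on the active subinterval is precisely what produces the entry $1$ (instead of $2$) on the diagonal of $P$, and this is the only place where an approximation, rather than an exact equality, enters. I would state this explicitly so the later error analysis in Section~\ref{sect4} can account for it.
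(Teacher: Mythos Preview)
Your argument is correct and is precisely the standard derivation: write $W(t)=T_W\Phi(t)$, integrate using the BPF operational matrix $\int_0^t\Phi(\tau)\,{\rm d}\tau=P\Phi(t)$, and convert back via $\Phi(t)=\tfrac1m T_WW(t)$. Note, however, that the paper does not supply its own proof of this lemma; it is simply quoted from the cited reference \cite{Paikaray}, so there is nothing in the present paper to compare against beyond observing that your substitution chain uses exactly the ingredients the paper records (the Theorem $W(t)=T_W\Phi(t)$, the identities $T_WT_W^T=mI$, $T_W^T=T_W$, and the BPF integration rule). Your closing remark that the relation is operational---exact only at the collocation nodes because $t-jh$ is replaced by $h/2$ on the active subinterval---is a useful clarification that the paper itself leaves implicit.
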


	\begin{lemma}\normalfont\cite{Paikaray}[Stochastic integration of Walsh function]
		Suppose that $W(t)$ is a Walsh function vector, then the It$\hat{o}$ integral of $W(t)$ is given by\\
		$\int_{0}^{t}W(s){\rm d}B(s)=\wedge _S W(t)$, where $\wedge_S =\frac{1}{m}T_WP_ST_W$ and $$P_S=
		\begin{bmatrix}
			B(\frac{h}{2}) &B(h)&\hdots &B(h)\\0 &B(\frac{3h}{2})-B(h)&\hdots &B(2h)-B(h)\\ \vdots &\vdots &\ddots &\vdots \\0 &0&\hdots &B(\frac{(2m-1)h}{2})-B((m-1)h)
		\end{bmatrix}.$$
	\end{lemma}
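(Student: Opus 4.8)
The plan is to derive the formula by transporting the already-known block-pulse stochastic operational identity through the change of basis between the Walsh and block-pulse systems. The two ingredients I would use are the Theorem above, giving $W(t)=T_W\Phi(t)$ (together with its inverse $\Phi(t)=\tfrac1m T_WW(t)$, which the excerpt records from $T_W^T=T_W$ and $T_WT_W^T=mI$), and the block-pulse It\^o identity $\int_0^t\Phi(s)\,{\rm d}B(s)=P_S\Phi(t)$ stated just before the Theorem, in which $P_S$ is exactly the matrix displayed in the present statement. So the only thing to check is that these compose in the right way.

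With these in hand the computation is short. Because the entries of $T_W$ are the deterministic constants $w_i(\eta_j)$, the matrix $T_W$ factors out of the It\^o integral by linearity of the stochastic integral, so
\begin{equation*}
\int_{0}^{t}W(s)\,{\rm d}B(s)=T_W\int_{0}^{t}\Phi(s)\,{\rm d}B(s)=T_WP_S\Phi(t)=\frac{1}{m}\,T_WP_ST_WW(t),
\end{equation*}
and comparing with $\int_0^tW(s)\,{\rm d}B(s)=\wedge_SW(t)$ gives $\wedge_S=\tfrac1m T_WP_ST_W$, as claimed. (The very same three lines with the deterministic operational matrix $P$ in place of $P_S$ reproduce the preceding lemma on the ordinary integral of $W(t)$, which is a convenient consistency check.) For completeness I would also spell out, for $t\in[nh,(n+1)h)$, the piecewise values $\int_0^t\phi_j(s)\,{\rm d}B(s)$ being $0$ for $j>n$, $B((j+1)h)-B(jh)$ for $j<n$, and $B(t)-B(nh)$ for $j=n$, so the reader sees how the columns of $P_S$ arise and why $P_S$ is upper triangular.

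The only genuinely delicate point — and the one I expect to be the main obstacle — is justifying the explicit form of $P_S$, specifically the replacement of the true partial It\^o increment $B(t)-B(nh)$ on the active subinterval by its midpoint value $B(\tfrac{(2n+1)h}{2})-B(nh)$. This is the sole approximation entering the lemma (everything else is an exact algebraic identity between the two function systems), and it is this collocation-at-the-midpoint choice that ultimately feeds the linear order of convergence established in Section~\ref{sect4}. If one is content to cite $\int_0^t\Phi(s)\,{\rm d}B(s)=P_S\Phi(t)$ as a known block-pulse fact, this obstacle disappears and the proof reduces to the one-line matrix manipulation above.
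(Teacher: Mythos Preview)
Your derivation is correct and is exactly the natural argument: transport the block-pulse stochastic operational identity $\int_0^t\Phi(s)\,{\rm d}B(s)=P_S\Phi(t)$ through the change of basis $W(t)=T_W\Phi(t)$, $\Phi(t)=\tfrac1m T_WW(t)$, using that $T_W$ is a constant matrix and hence commutes with the It\^o integral. Note, however, that the paper does not supply its own proof of this lemma; it simply quotes the result from the reference \cite{Paikaray}, so there is no in-paper argument to compare against. Your one-line matrix computation is precisely the proof one expects to find in that source, and your remark that the only genuine approximation lies inside $P_S$ (the midpoint collocation on the active subinterval) is accurate and worth keeping.
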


 \section{Numerical method for NLSVIE} \label{sect3a}
Let us consider the NLSVIE  as
 	\begin{equation}\label{NLSIE}
 	x(t)=x_0+\int_{0}^{t}k_1(s,t)\beta(x(s)){\rm d}s+\int_{0}^{t}k_2(s,t)\sigma(x(s)){\rm d}B(s)
 \end{equation}
 where $x(t)$, $k_1(s,t)$, $k_2(s,t)$ for $s,t\in[0,T)$, represent the stochastic processes based on the same probability space $(\Omega,F,P)$ and $x(t)$ is unknown. Here $B(t)$ is Brownian motion \cite{Kloeden,Oksendal} and $\int_{0}^{t}k_2(s,t)x(s){\rm d}B(s)$ is the It$\hat{o}$ integral.\\
 Let $z_1(t)=\beta(x(s))$ and $z_2(t)=\sigma(x(s))$ which implies,
 $$x(t)=x_0+\int_{0}^{t}k_1(s,t)z_1(t){\rm d}s+\int_{0}^{t}k_2(s,t)z_2(t){\rm d}B(s).$$
 We can approximate $z_1(t)$, $z_2(t)$, $k_1(s,t)$, $k_2(s,t)$ for $s,t\in[0,T)$ as
  \begin{equation}
 	z_\eta(t)\simeq Z_\eta^TT_WW(t), \qquad  \eta=1,2,\label{Eq:zapp}
 \end{equation} 
 where  $ Z_\eta=
 \begin{bmatrix}
 	c_0 &c_1 &c_2 \hdots c_{m-1}
 \end{bmatrix}^T$
 and 
 $c_i=\int_{ih}^{(i+1)h}z_\eta(s){\rm d}s$.\\
 Similarly, for $\gamma=1,2$
 \begin{equation}
 	k_\gamma(s,t)\simeq W^T(s)T_WK_\gamma T_WW(t)=W^T(t)T_WK^T_\gamma T_WW(s) \label{Eq:Kapp}
 \end{equation}
 where $K_\gamma=[k_{ij}]_{m\times m}, k_{ij}=\int_{ih}^{(i+1)h}\int_{jh}^{(j+1)h}k_\gamma(s,t){\rm d}t{\rm d}s$.\\
 Assume that \begin{equation}
 	x(t)\simeq X^TT_WW(t), \label{Eq:xapp}
 \end{equation} 
 where  $ X=
 \begin{bmatrix}
 	x_0 &x_1 &x_2 \hdots x_{m-1}
 \end{bmatrix}^T$
 and 
 $x_i=\int_{ih}^{(i+1)h}x(s){\rm d}s$.
 Replacing \eqref{Eq:zapp}, \eqref{Eq:xapp} and \eqref{Eq:Kapp} in \eqref{NLSIE} leads to
 \begin{eqnarray}
 	X^TT_WW(t)&\simeq &x_0+\int_{0}^{t}W^T(t)T_WK^T_1T_WW(s)W^T(s)T_WZ_1{\rm d}s\nonumber\\
 	&&+\int_{0}^{t}W^T(t)T_WK^T_2T_WW(s)W^T(s)T_WZ_2 {\rm d}B(s)\nonumber\\
 	&=&x_0+W^T(t)T_WK^T_1T_W\int_{0}^{t}W(s)W^T(s)T_WZ_1{\rm d}s\nonumber\\
 	&&+W^T(t)T_WK^T_2T_W\int_{0}^{t}W(s)W^T(s)T_WZ_2 {\rm d}B(s)\label{laxman}
 \end{eqnarray}
 Now
 \begin{equation}
 	\int_{0}^{t}W(s)W^T(s)T_WZ_1{\rm d}s=T_W\tilde{Z_1}PT_WW(t)\label{int}
 \end{equation}
 Similarly,
 \begin{equation}
 	\int_{0}^{t}W(s)W^T(s)T_WZ_2{\rm d}B(s)
 	=T_W\tilde{Z_2}P_ST_WW(t)\label{intS}
 \end{equation}
 Inserting \eqref{int} and \eqref{intS} in \eqref{laxman} we obtain,
 \begin{eqnarray*}
 	X^TT_WW(t)&=&x_0+mW^T(t)T_WK^T_1\tilde{Z_1}PT_WW(t)\\
 	&&	+mW^T(t)T_WK^T_2\tilde{Z_2}P_ST_WW(t)\\
 	&=&x_0+W^T(t)T_WH_1T_WW(t)+W^T(t)T_WH_2T_WW(t)\\
 	&=&x_0+m\hat{H_1}^TT_WW(t)+m\hat{H_2}^TT_WW(t)
 \end{eqnarray*}
 i.e.,
 \begin{equation}
 	\Big(X^T-m\hat{H_1}^T-m\hat{H_2}^T\Big)T_WW(t)=x_0 \label{Eq:Final1}
 \end{equation}
and,
\begin{eqnarray}\label{Z}
	Z_1^TT_WW(t)=\beta(x_0+m\hat{H_1}^TT_WW(t)+m\hat{H_2}^TT_WW(t))\\\nonumber
	Z_2^TT_WW(t)=\sigma(x_0+m\hat{H_1}^TT_WW(t)+m\hat{H_2}^TT_WW(t))
\end{eqnarray}
in which $H_1=mK^T_1\tilde{Z_1}P$, $H_2=mK^T_2\tilde{Z_2}P_S$ and $\hat{H}_i$ denotes the $m$-vector with elements equal to the diagonal entries of $H_i$.\\
To calculate $Z_1$ and $Z_2$, we collocate the aforementioned equation \eqref{Z} at $t_j=\frac{2j+1}{2m}$ for $j=0,1,\hdots, m-1$ and solve the following system
 \begin{eqnarray}\label{Zf}
 	Z_1^TT_WW(t_j)=\beta(x_0+m\hat{H_1}^TT_WW(t_j)+m\hat{H_2}^TT_WW(t_j))\\\nonumber
 	Z_2^TT_WW(t_j)=\sigma(x_0+m\hat{H_1}^TT_WW(t_j)+m\hat{H_2}^TT_WW(t_j))
 \end{eqnarray}	
\section{Error Analysis} \label{sect4} 
\noindent This section focuses on analyzing the discrepancy between the approximate  and  exact solutions of the stochastic Volterra integral equation. Prior to commencing the analysis, we define the notation $E(|X|^2)^\frac{1}{2} = \|X\|_2$.\\
	\begin{theorem}\normalfont\label{fin}\cite{Paikaray}
If $f\in L^2[0,1)$ and fulfills the Lipschitz condition with a Lipschitz constant $C$, then the $2$-norm of $e_m(t)$ is $\mathcal{O}(h)$, where $e_m(t)=|f(t)-\sum\limits_{i=0}^{m-1}c_iw_i(t)|$ and $c_i=\int_{0}^{1}f(s)w_i(s){\rm d}s$.
\end{theorem}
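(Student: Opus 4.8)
The plan is to exploit the fact that, for $m=2^{k}$, truncating the Walsh series after $m$ terms produces \emph{exactly} the piecewise‑constant approximation of $f$ by its cell averages over the dyadic mesh $\{I_i=[ih,(i+1)h)\}_{i=0}^{m-1}$, $h=1/m$; once this identification is made, the estimate collapses to a one‑line Lipschitz argument on each subinterval.

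First I would record the structural facts already available. Each Walsh function $w_i$ with $0\le i\le m-1$ is constant on every dyadic interval $I_j$ of length $h$, and $W(t)=T_W\Phi(t)$ with $T_W=T_W^{T}$ and $T_WT_W^{T}=mI$ (the cited theorem and the remark following it). Writing $F=[f_0,\dots,f_{m-1}]^{T}$ with $f_j=\int_{I_j}f(s){\rm d}s$ and using that $w_i$ is constant on $I_j$ with value $w_i(\eta_j)=(T_W)_{ij}$, the Walsh coefficient is $c_i=\int_{0}^{1}f(s)w_i(s){\rm d}s=\sum_{j=0}^{m-1}w_i(\eta_j)\int_{I_j}f(s){\rm d}s=(T_WF)_i$, i.e. $C=T_WF$. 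Hence the partial sum is
$$S_mf(t):=\sum_{i=0}^{m-1}c_iw_i(t)=C^{T}W(t)=F^{T}T_W^{T}W(t)=F^{T}T_WW(t)=m\,F^{T}\Phi(t),$$
so that for $t\in I_i$ we get $S_mf(t)=m f_i=\tfrac1h\int_{I_i}f(s){\rm d}s=:\bar f_i$, the mean of $f$ over $I_i$. This reconciles the two equivalent descriptions of the coefficients ($\int_0^1 fw_i$ versus the $F$/$T_W$ form) used in the paper.

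Next, for $t\in I_i$ I would estimate pointwise, using the Lipschitz bound $|f(t)-f(s)|\le C|t-s|$ and $|t-s|< h$ for $s,t\in I_i$:
$$e_m(t)=|f(t)-\bar f_i|=\left|\frac1h\int_{I_i}\bigl(f(t)-f(s)\bigr){\rm d}s\right|\le\frac1h\int_{I_i}C|t-s|{\rm d}s\le Ch.$$
Since this holds for every $i$, it holds for a.e.\ $t\in[0,1)$, and therefore
$$\|e_m\|_2^{2}=\int_{0}^{1}e_m(t)^{2}{\rm d}t\le\int_{0}^{1}C^{2}h^{2}{\rm d}t=C^{2}h^{2},$$
giving $\|e_m\|_2\le Ch=\mathcal{O}(h)$, as claimed. (If instead the ``$2$-norm'' is intended in the stochastic sense $E(|\cdot|^{2})^{1/2}$, the same deterministic pointwise bound $e_m(t)\le Ch$ makes the conclusion immediate.)

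The only real obstacle is the first step — recognizing and rigorously justifying that the $2^{k}$-term Walsh truncation coincides with the block‑pulse (cell‑average) projection; this rests entirely on the dyadic piecewise‑constancy of the Walsh system together with the algebraic identities $T_W=T_W^{T}$ and $T_WT_W^{T}=mI$ established earlier. After that, the rate is just the standard first‑order accuracy of piecewise‑constant approximation of a Lipschitz function, so no further difficulty is anticipated.
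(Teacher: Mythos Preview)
Your argument is correct. The key identification --- that for $m=2^{k}$ the $m$-term Walsh partial sum coincides with the block-pulse projection onto cell averages --- is justified exactly as you indicate, via $W(t)=T_W\Phi(t)$ together with $T_W=T_W^{T}$ and $T_WT_W=mI$; the pointwise bound $e_m(t)\le Ch$ then follows immediately from the Lipschitz hypothesis, and integration gives $\|e_m\|_2\le Ch$.

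Note, however, that the paper does \emph{not} supply its own proof of this theorem: it is quoted from \cite{Paikaray} and stated without argument. There is therefore nothing in the present paper to compare your proof against. Your approach is the natural one and is almost certainly what the cited reference does (this is the standard first-order estimate for piecewise-constant approximation of Lipschitz functions, phrased through the Walsh/BPF equivalence). One cosmetic point: the paper later uses the inequality in the form $E(|f(t)-f_m(t)|^2)\le C^2h^2$ (see the line deriving \eqref{sum}), which is exactly your pointwise bound squared, so your final parenthetical remark about the stochastic $2$-norm is the relevant reading here.
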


\begin{theorem}\normalfont \label{Thk}\cite{Paikaray}
Assume that $k\in L^2\big([0,1)\times [0,1)\big)$ fulfills the Lipschitz condition with Lipschitz constant $L$. If  $k_m(x,y)=\sum\limits_{i=0}^{m-1}\sum\limits_{j=0}^{m-1}c_{ij}w_i(x)w_j(y)$, $c_{ij}=\int_{0}^{1}\int_{0}^{1}k(s,t)w_i(s)w_j(t){\rm d}t{\rm d}s$, then $\|e_m(x,y)\|_2=O(h)$, where $|e_m(x,y)|=|k(x,y)-k_m(x,y)|$.
\end{theorem}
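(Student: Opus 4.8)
The plan is to reduce the bivariate statement to the mechanism already behind the one–variable Theorem~\ref{fin}: first identify the Walsh partial sum $k_m$ with the piecewise–constant function obtained by averaging $k$ over the cells of the uniform $m\times m$ grid on $[0,1)\times[0,1)$, and then estimate the resulting error pointwise on a single cell by means of the Lipschitz condition. Throughout, $h=1/m$ and $C_{ij}=[ih,(i+1)h)\times[jh,(j+1)h)$.

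First I would establish the identification $k_m=(\text{grid cell average of }k)$. Since the first $m$ Walsh functions are orthonormal on $[0,1)$, the products $\{w_i(x)w_j(y):0\le i,j\le m-1\}$ form an orthonormal system in $L^2([0,1)\times[0,1))$, and the numbers $c_{ij}=\int_{0}^{1}\int_{0}^{1}k(s,t)w_i(s)w_j(t)\,{\rm d}t\,{\rm d}s$ are exactly the corresponding Fourier coefficients; hence $k_m$ is the $L^2$–orthogonal projection of $k$ onto $V_m:=\operatorname{span}\{w_i\otimes w_j\}$. By the relation $W(t)=T_W\Phi(t)$ together with $T_WT_W^{T}=mI$ (so $T_W$ is invertible, which is where $m=2^k$ is used), one has $\operatorname{span}\{w_0,\dots,w_{m-1}\}=\operatorname{span}\{\phi_0,\dots,\phi_{m-1}\}$, and therefore $V_m$ is precisely the space of functions that are constant on each cell $C_{ij}$. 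The orthogonal projection onto that space is the cell–averaging operator, so that, for $(x,y)\in C_{ij}$,
\[
k_m(x,y)=\frac{1}{h^{2}}\int_{ih}^{(i+1)h}\int_{jh}^{(j+1)h}k(s,t)\,{\rm d}t\,{\rm d}s .
\]

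Next I would carry out the estimate. Fix $(x,y)\in C_{ij}$ and write $k(x,y)=\frac{1}{h^{2}}\int_{ih}^{(i+1)h}\int_{jh}^{(j+1)h}k(x,y)\,{\rm d}t\,{\rm d}s$, so that
\[
|e_m(x,y)|=\Big|\frac{1}{h^{2}}\int_{ih}^{(i+1)h}\int_{jh}^{(j+1)h}\big(k(x,y)-k(s,t)\big)\,{\rm d}t\,{\rm d}s\Big|\le\frac{1}{h^{2}}\int_{ih}^{(i+1)h}\int_{jh}^{(j+1)h}|k(x,y)-k(s,t)|\,{\rm d}t\,{\rm d}s .
\]
For $(s,t)\in C_{ij}$ we have $|x-s|<h$ and $|y-t|<h$, hence the Lipschitz condition gives $|k(x,y)-k(s,t)|\le L(|x-s|+|y-t|)<2Lh$; therefore $|e_m(x,y)|\le 2Lh$ for every $(x,y)\in[0,1)\times[0,1)$, and consequently
\[
\|e_m\|_2=\Big(\int_{0}^{1}\int_{0}^{1}|e_m(x,y)|^{2}\,{\rm d}x\,{\rm d}y\Big)^{1/2}\le 2Lh=\mathcal{O}(h).
\]

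The only genuinely delicate point is the identification of $k_m$ with the grid cell average: it rests on the equivalence of the Walsh and block pulse spans and so on $m$ being a power of two, and everything downstream of it is routine (it is the two–dimensional analogue of the argument for Theorem~\ref{fin}). One might be tempted instead to apply Theorem~\ref{fin} one variable at a time, approximating first in $x$ and then in $y$; but bounding the intermediate "approximate in $x$ only" error in $L^2$ costs a factor $\sqrt h$ at the second stage, because there are $m$ Walsh modes in $x$ each carrying an $\mathcal{O}(h)$ coefficient error, so that naive iteration yields only $\mathcal{O}(\sqrt h)$. The cell–average route avoids this loss and delivers the sharp $\mathcal{O}(h)$ bound directly.
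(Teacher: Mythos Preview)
Your argument is correct: identifying the Walsh partial sum with the block--pulse (cell--average) projection via $W(t)=T_W\Phi(t)$ and $T_WT_W^{T}=mI$, and then bounding the oscillation on each cell by the Lipschitz condition, is exactly the right mechanism and yields $\|e_m\|_2=\mathcal{O}(h)$.

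There is, however, no proof in the present paper to compare against: Theorem~\ref{Thk} is quoted from \cite{Paikaray} and used here without proof. The only internal evidence of the intended argument is the constant $\sqrt{2}L_\gamma h$ that appears when Theorem~\ref{Thk} is invoked inside the proof of the main error estimate (e.g.\ in deriving \eqref{normkernel}). That constant corresponds to reading the Lipschitz hypothesis with the Euclidean distance, $|k(x,y)-k(s,t)|\le L\sqrt{(x-s)^2+(y-t)^2}\le \sqrt{2}\,Lh$ on a cell, whereas you used the $\ell^1$ form $|k(x,y)-k(s,t)|\le L(|x-s|+|y-t|)<2Lh$. Either reading gives $\mathcal{O}(h)$, so this is only a cosmetic discrepancy; if you want your constant to match the paper's downstream computations, replace $2Lh$ by $\sqrt{2}Lh$.
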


\begin{theorem}\normalfont
Assume that $x_m(t)$ be the approximated solution of the NLSIE \eqref{NLSIE}. If \begin{enumerate}
\item[a.] $f\in L^2[0,1)$, $k_1(s,t)$ and $k_2(s,t)\in L^2\big( [0,1)\times[0,1)\big)$ fulfills the Lipschitz condition with Lipschitz constants with Lipschitz constants  $C$, $L_1$ and $L_2$ respectively
\item[b.] $|k_1(s,t)|\le\rho_1$, $|k_2(s,t)|\le\rho_2$, $|\beta(x(t))|\le \zeta_1$ and $|\sigma(x(t))|\le \zeta_2$, and
\item[c.] for $\eta_1,\eta_2\ge0$, $|\beta(x(t))-\beta(x_m(t))|\le \eta_1|x(t)-x_m(t)|$, $|\sigma(x(t))-\sigma(x_m(t))|\le \eta_2|x(t)-x_m(t)|$.
		\end{enumerate}
	Then,  we have $\|x(t)-x_m(t)\|_2^2=O(h^2)$.
	\end{theorem}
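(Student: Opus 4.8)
The plan is to control $\varepsilon(t):=\|x(t)-x_m(t)\|_2^2$ by subtracting the two integral equations, decomposing each integrand into a kernel–approximation part, a nonlinearity–Lipschitz part, and a projection–error part, and then closing the resulting integral inequality with Gronwall's lemma.

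First I would record that, by construction (equations \eqref{Eq:zapp}, \eqref{Eq:Kapp} and the collocated system \eqref{Zf}), the approximation $x_m$ satisfies the perturbed equation
$$x_m(t)=x_0+\int_0^t k_{1,m}(s,t)\,\beta_m(x_m(s))\,{\rm d}s+\int_0^t k_{2,m}(s,t)\,\sigma_m(x_m(s))\,{\rm d}B(s),$$
where $k_{\gamma,m}$, $\beta_m$, $\sigma_m$ are the $m$-term Walsh (equivalently block–pulse) approximations. Subtracting from \eqref{NLSIE} and using $(a+b)^2\le 2a^2+2b^2$ gives
$$\varepsilon(t)\le 2\Big\|\int_0^t\!\big[k_1\beta(x)-k_{1,m}\beta_m(x_m)\big](s,t)\,{\rm d}s\Big\|_2^2+2\Big\|\int_0^t\!\big[k_2\sigma(x)-k_{2,m}\sigma_m(x_m)\big](s,t)\,{\rm d}B(s)\Big\|_2^2.$$
For the deterministic integrand I would insert and remove the intermediate quantities $k_{1,m}\beta(x)$ and $k_{1,m}\beta(x_m)$, producing the three–way split
$$k_1\beta(x)-k_{1,m}\beta_m(x_m)=(k_1-k_{1,m})\beta(x)+k_{1,m}\big(\beta(x)-\beta(x_m)\big)+k_{1,m}\big(\beta(x_m)-\beta_m(x_m)\big),$$
and analogously for the stochastic integrand with $\sigma$ and $k_2$.

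Next I would bound each of the six resulting pieces: the first using $\|k_\gamma-k_{\gamma,m}\|_2=O(h)$ from Theorem \ref{Thk} together with $|\beta(x)|\le\zeta_1$, $|\sigma(x)|\le\zeta_2$; the second using $|k_{\gamma,m}|\le\rho_\gamma$ (a block–pulse cell average of a function bounded by $\rho_\gamma$ stays bounded by $\rho_\gamma$) and the one–sided Lipschitz hypothesis (c); the third using $|k_{\gamma,m}|\le\rho_\gamma$ and $\|\beta(x_m)-\beta_m(x_m)\|_2=O(h)$, $\|\sigma(x_m)-\sigma_m(x_m)\|_2=O(h)$ from Theorem \ref{fin}. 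For the deterministic term I would apply Cauchy–Schwarz, $\big|\int_0^t g\,{\rm d}s\big|^2\le T\int_0^t|g(s)|^2\,{\rm d}s$; for the It\^o term the isometry $\big\|\int_0^t g\,{\rm d}B\big\|_2^2=\int_0^t\|g(s)\|_2^2\,{\rm d}s$, valid since hypotheses (a)–(b) make each integrand adapted and square–integrable. Collecting terms yields an inequality of the form
$$\varepsilon(t)\le A\,h^2+B\int_0^t\varepsilon(s)\,{\rm d}s,$$
with $A,B$ depending only on $T$, the bounds $\rho_\gamma,\zeta_\gamma$, the constants $\eta_\gamma,L_\gamma$, and the $O(h)$-constants from Theorems \ref{fin}–\ref{Thk}.

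Finally, Gronwall's inequality gives $\varepsilon(t)\le A h^2 e^{BT}=O(h^2)$ uniformly on $[0,T)$, which is the assertion. The main obstacle I expect is the careful bookkeeping of the three–way decomposition together with justifying the It\^o–isometry step (adaptedness and square–integrability of every integrand). A secondary subtlety is that the computed $x_m$ solves the algebraic system only at the nodes $t_j=(2j+1)/2m$, so the residual collocation/quadrature defect must be absorbed into the $O(h)$ term; I would note that this defect is of the same order as the projection errors already handled by Theorems \ref{fin} and \ref{Thk}, so it does not affect the final rate.
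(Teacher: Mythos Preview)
Your proposal is correct and follows essentially the same architecture as the paper: subtract the exact and approximate equations, split each integrand into a kernel--approximation part and a nonlinearity--Lipschitz part, apply Cauchy--Schwarz to the Lebesgue integral and the It\^o isometry to the stochastic integral, and close with Gronwall. The differences are bookkeeping. First, the paper treats the forcing term as a general Lipschitz function $f(t)$ (hence the hypothesis on $C$ in~(a)) and carries an extra $\|f-f_m\|_2^2\le C^2h^2$ contribution from Theorem~\ref{fin}, which you omit by reading \eqref{NLSIE} with the constant $x_0$; you should add this term to match the stated hypotheses. Second, the paper pivots its splitting on $k_\gamma$ rather than $k_{\gamma,m}$, obtaining
\[
|k_1\beta(x)-k_{1m}\beta(x_m)|\le |k_1|\,|\beta(x)-\beta(x_m)|+|k_1-k_{1m}|\,|\beta(x)|+|k_1-k_{1m}|\,|\beta(x)-\beta(x_m)|,
\]
and does \emph{not} introduce your third term $k_{1,m}(\beta(x_m)-\beta_m(x_m))$. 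Your extra term is harmless here (indeed, since $x_m$ is piecewise constant on the BPF grid, $\beta(x_m)$ lies in the same $m$-dimensional span and its projection error vanishes), but the paper's pivot avoids needing the auxiliary bound $|k_{\gamma,m}|\le\rho_\gamma$. Either decomposition yields the same integral inequality $\varepsilon(t)\le R_1+R_2\int_0^t\varepsilon(s)\,{\rm d}s$ with $R_1=O(h^2)$, and hence the same conclusion.
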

	\begin{proof}
	Consider the Volterra integral equation \eqref{NLSIE} and let $x_m(t)$ be the approximation of the solution obtained using the Walsh function. Then, we have
		\begin{eqnarray*}
			x(t)-x_m(t)&=&f(t)-f_m(t)\\
			&+&\int_{0}^{t}\big(k_1(s,t)\beta(x(s))-k_{1m}(s,t)\beta(x_m(s))\big){\rm d}s\\
			&+&\int_{0}^{t}\big(k_2(s,t)\sigma(x(s))-k_{2m}(s,t)\sigma(x_m(s))\big){\rm d}B(s)
		\end{eqnarray*}
		or,
		\begin{eqnarray*}
			|x(t)-x_m(t)|&\le&| f(t)-f_m(t)|\\\nonumber
			&+&\biggl|\int_{0}^{t}\big(k_1(s,t)\beta(x(s))-k_{1m}(s,t)\beta(x_m(s))\big){\rm d}s\biggr|\\\nonumber
			&+&\biggl|\int_{0}^{t}\big(k_2(s,t)\sigma(x(s))-k_{2m}(s,t)\sigma(x_m(s))\big){\rm d}B(s)\biggr|.\nonumber
		\end{eqnarray*}
		We know that, $(a+b+c)^2\le 3a^2+3b^2+3c^2$
		\begin{eqnarray*}
			|x(t)-x_m(t)|^2&\le&3| f(t)-f_m(t)|^2\\\nonumber
			&+&3\biggl|\int_{0}^{t}\big(k_1(s,t)\beta(x(s))-k_{1m}(s,t)\beta(x_m(s))\big){\rm d}s\biggr|^2\\\nonumber
			&+&3\biggl|\int_{0}^{t}\big(k_2(s,t)\sigma(x(s))-k_{2m}(s,t)\sigma(x_m(s))\big){\rm d}B(s)\biggr|^2.\nonumber
		\end{eqnarray*}
		which implies that
		\begin{eqnarray}\label{In}
			E\big(|x(t)-x_m(t)|^2\big)&\le&E\biggl(3| f(t)-f_m(t)|^2\biggr)\\\nonumber
			&+&E\biggl(3\biggl|\int_{0}^{t}\big(k_1(s,t)\beta(x(s))-k_{1m}(s,t)\beta(x_m(s))\big){\rm d}s\biggr|^2\biggr)\\\nonumber
			&+&E\biggl(3\biggl|\int_{0}^{t}\big(k_2(s,t)\sigma(x(s))-k_{2m}(s,t)\sigma(x_m(s))\big){\rm d}B(s)\biggr|^2\biggr).
		\end{eqnarray}
	Suppose, $$I_1=E\biggl(3\biggl|\int_{0}^{t}\big(k_1(s,t)\beta(x(s))-k_{1m}(s,t)\beta(x_m(s))\big){\rm d}s\biggr|^2\biggr)$$ and
	$$I_2=E\biggl(3\biggl|\int_{0}^{t}\big(k_2(s,t)\sigma(x(s))-k_{2m}(s,t)\sigma(x_m(s))\big){\rm d}B(s)\biggr|^2\biggr).$$
Applying the Theorem \ref{fin} in  inequality \eqref{In} results 
	\begin{equation}\label{sum}
		E\big(|x(t)-x_m(t)|^2\big)\le C^2h^2+I_1+I_2
	\end{equation}
		Now,
		\begin{eqnarray*}
			|k_1(s,t)\beta(x(s))-k_{1m}(s,t)\beta(x_m(s))|
			=&&|k_1(s,t)\beta(x(s))-k_1(s,t)\beta(x_m(s))\\
			&+&k_1(s,t)\beta(x_m(s))-k_{1m}(s,t)\beta(x_m(s))\|\\
			\le&&|k_1(s,t)||\beta(x(s))-\beta(x_m(s))|\\
			&+&|k_1(s,t)-k_{1m}(s,t)||\beta(x_m(s))|\\
			=&&|k_1(s,t)||\beta(x(s))-\beta(x_m(s))|\\
			&+&|k_1(s,t)-\beta k_{1m}(s,t)||\beta(x(s))-\beta(x(s))+\beta(x_m(s))|\\
			\le&&|k_1(s,t)||\beta(x(s))-\beta(x_m(s))|\\
			&+&|k_1(s,t)-k_{1m}(s,t)||\beta(x(s))|\\
			&+&|k_1(s,t)-k_{1m}(s,t)||\beta(x(s))-\beta(x_m(s))|\\ 	
		\end{eqnarray*}
		Let $|k_1(s,t)|\le\rho_1$, $|\beta(x(t))|\le \zeta_1$, $|\beta(x(t))-\beta(x_m(t))|\le \eta_1|x(t)-x_m(t)|$ and using Theorem \ref{Thk}, we get
		\begin{eqnarray*}
			|k_1(s,t)\beta(x(s))-k_{1m}(s,t)\beta(x_m(s))|&\le& \rho_1\eta_1|x(s)-x_m(s)|\\\nonumber
			&+&\sqrt{2}L_1h\zeta_1
			+\sqrt{2}L_1h\eta_1|x(s)-x_m(s)|
		\end{eqnarray*}
	which arrives at
	\begin{eqnarray}\label{normkernel}
		|k_1(s,t)\beta(x(s))-k_{1m}(s,t)\beta(x_m(s))|\le
		 \sqrt{2}L_1h\zeta_1+(\rho_1\eta_1+\sqrt{2}L_1h\eta_1)|x(s)-x_m(s)|
	\end{eqnarray}
		which gives,
		\begin{eqnarray*}
			I_1
			&\le&E\biggl(3\biggl(\int_{0}^{t}\biggl|k_1(s,t)\beta(x(s))-k_{1m}(s,t)\beta(x_m(s))\biggr|{\rm d}s\biggr)^2\biggr)\\
			&\le&E\biggl(3\biggl(\int_{0}^{t}\big(\sqrt{2}L_1h\zeta_1\\
			&+&(\rho_1\eta_1+\sqrt{2}L_1h\eta_1)|x(s)-x_m(s)|\big){\rm d}s\biggr)^2\biggr) \end{eqnarray*}
		In virtue of  the Cauchy- Schwarz inequality, for $t>0$ and $f\in L^2[0,t)$
		$$\biggl|\int_{0}^{t}f(s){\rm d}s\biggr|^2\le t\int_{0}^{t}|f|^2{\rm d}s$$
		this implies,
		\begin{eqnarray*}
		I_1
			&\le&E\biggl(3\int_{0}^{t}\biggl(\sqrt{2}L_1h\zeta\\
			&+&(\rho_1\eta_1+\sqrt{2}L_1h\eta_1)|x(s)-x_m(s)|\biggr)^2{\rm d}s\biggr)\\
			&\le&E\biggl(6\int_{0}^{t}\biggl(\bigl(\sqrt{2}L_1h\zeta_1\bigr)^2\\
			&+&\bigl((\rho_1\eta_1+\sqrt{2}L_1h\eta_1)|x(s)-x_m(s)|\bigr)^2\biggr){\rm d}s\biggr)\\
			&\le&E\biggl(6\bigl(\sqrt{2}L_1h\zeta_1\bigr)^2\\
			&+&6(\rho_1\eta_1+\sqrt{2}L_1h\eta_1)^2\int_{0}^{t}|x(s)-x_m(s)|^2{\rm d}s\biggr)
		\end{eqnarray*}
	Hence, 
	\begin{eqnarray}\label{eq:ink}
		I_2&\le& 6\bigl(\sqrt{2}L_1h\zeta_1\bigr)^2\\\nonumber
		&+&6(\rho_1\eta_1+\sqrt{2}L_1h\eta_1)^2\int_{0}^{t}E\bigl(|x(s)-x_m(s)|^2\bigr){\rm d}s.
	\end{eqnarray}
		Now,
		\begin{eqnarray*}
		I_2	\le&&	E\biggl(3\int_{0}^{t}\biggl|k_2(s,t)\sigma(x(s))-k_{2m}(s,t)\sigma(x_m(s))\biggr|^2{\rm d}s\biggr)
		\end{eqnarray*}
				Let $|k_2(s,t)|\le\rho_2$, $|\sigma(x(t))|\le \zeta_2$, $|\sigma(x(t))-\sigma(x_m(t))|\le \eta_2|x(t)-x_m(t)|$ and using Theorem \ref{Thk}, we get
				$$	|k_2(s,t)\sigma(x(s))-k_{2m}(s,t)\sigma(x_m(s))|\le
					\sqrt{2}L_2h\zeta_2+(\rho_2\eta_2+\sqrt{2}L_2h\eta_2)|x(s)-x_m(s)|$$
		\begin{eqnarray*}
			I_2\le&&
			E\biggl(3\int_{0}^{t}\big(\sqrt{2}L_2h\zeta_2+(\rho_2\eta_2+\sqrt{2}L_2h\eta_2)|x(s)-x_m(s)|\big)^2{\rm d}s\biggr)\\
			\le&&
			E\biggl(6\int_{0}^{t}\big((\sqrt{2}L_2h\zeta_2)^2+(\rho_2\eta_2+\sqrt{2}L_2h\eta_2)^2|x(s)-x_m(s)|^2\big){\rm d}s\biggr)\\
			&=&E\biggl(6(\sqrt{2}L_2h\zeta_2)^2+6(\rho_2\eta_2+\sqrt{2}L_2h\eta_2)^2\int_{0}^{t}|x(s)-x_m(s)|^2{\rm d}s\biggr)			
		\end{eqnarray*}
	Hence,
	\begin{eqnarray}\label{eq:inkb}
		I_2&\le& 6(\sqrt{2}L_2h\zeta_2)^2\\\nonumber
		&+&6(\rho_2\eta_2+\sqrt{2}L_2h\eta_2)^2\int_{0}^{t}E\bigl(|x(s)-x_m(s)|^2\bigr){\rm d}s
	\end{eqnarray}
Employing \eqref{eq:ink} and \eqref{eq:inkb} in \eqref{sum}, we have
		\begin{eqnarray*}
					E\big(|x(t)-x_m(t)|^2\big)&\le& C^2h^2\\\nonumber
					&+& 6\bigl(\sqrt{2}L_1h\zeta_1\bigr)^2\\\nonumber
					&+&6(\rho_1\eta_1+\sqrt{2}L_1h\eta_1)^2\int_{0}^{t}E\bigl(|x(s)-x_m(s)|^2\bigr){\rm d}s\\\nonumber
					&+&6(\sqrt{2}L_2h\zeta_2)^2\\\nonumber
					&+&6(\rho_2\eta_2+\sqrt{2}L_2h\eta_2)^2\int_{0}^{t}E\bigl(|x(s)-x_m(s)|^2\bigr){\rm d}s\\
		\end{eqnarray*}
	which implies that
		\begin{eqnarray}
		E\big(|x(t)-x_m(t)|^2\big)&\le&\bigl(C^2h^2+6\bigl(\sqrt{2}L_1h\zeta_1\bigr)^2+6(\sqrt{2}L_2h\zeta_2)^2\bigr)\\\nonumber
		&+&\bigl(6(\rho_1\eta_1+\sqrt{2}L_1h\eta_1)^2+6(\rho_2\eta_2+\sqrt{2}L_2h\eta_2)^2\bigr)\int_{0}^{t}E\bigl(|x(s)-x_m(s)|^2\bigr){\rm d}s.
	\end{eqnarray}
Let \begin{eqnarray*}
	R_1&=&\bigl(C^2h^2+6\bigl(\sqrt{2}L_1h\zeta_1\bigr)^2+6(\sqrt{2}L_2h\zeta_2)^2\bigr)\\
	R_2&=&\bigl(6(\rho_1\eta_1+\sqrt{2}L_1h\eta_1)^2+6(\rho_2\eta_2+\sqrt{2}L_2h\eta_2)^2\bigr)
\end{eqnarray*}
By using Gronwall's inequality, we have
\begin{eqnarray}
	E\big(|x(t)-x_m(t)|^2\big)&\le&R_1\exp\biggl(\int_{0}^{t}R_2{\rm d}s\biggr).
\end{eqnarray}
which implies that,
\begin{equation}
\|x(t)-x_m(t)\|_2^2	=E\big(|x(t)-x_m(t)|^2\big)\le R_1e^{R_2}=O(h^2)
\end{equation} 
		\end{proof}
\section{Numerical Examples} \label{sect5}
\noindent This section employs the proposed method to solve the NLSVIE. Two numerical examples are given to demonstrate the convergence of the method by comparing the approximate and analytical results. To measure the error between the two solutions, the infinity norm of the error $E$ is defined as $\lVert E \rVert_\infty = \underset{1 \leq i \leq m}{\max} \lvert X_i - Y_i \rvert$, where $X_i$ and $Y_i$ are the Walsh coefficients of the exact and approximate solutions, respectively. The number of iterations for each example is denoted as $n$, while the mean and standard deviation of the error $E$ are represented as $\bar{x_E}$ and $s_E$, respectively. All computations are performed using Matlab 2013(a).
\begin{example} \normalfont \cite{Kloeden}\label{Ex1}
	Let us consider the NLSVIE
	 \begin{equation*}
		x(t)=x_0-\frac{a^2}{2}\int_{0}^{t}\tanh(x(s))\sech^2(x(s)){\rm d}s+a\int_{0}^{t}\sech(x(s)){\rm d}B(s)
	\end{equation*}
	where, $x_0=0$ and $a=\frac{1}{30}$ and $x(t)$ represents the unknown stochastic process based on the same probability space $(\Omega,F,P)$, and $B(t)$ is a Brownian motion process. The exact solution is given by $x(t)=\sinh^{-1}(aB(t)+\sinh(x_0))$. 
\end{example}
 Table \ref{Tab1}  reports $\bar{x}_E$ and $s_E$  errors  as well as interval of confidence for mean error  with $m=16$ and $50$ iterations.  Figure  \ref{fig1}  displays the numerical and exact solutions with $m=32$ and $m=64$.   Figure  \ref{fig2}   shows  the behavior of error solutions with $m=16$,  $m=32$, and $n=50$.    Table \ref{Tab2} represents $\bar{x}_E$ and $s_E$  errors  as well as interval of confidence for mean error  with $m=32$ and $50$ iterations.
\begin{table}[H]
	\caption{ $\bar{x}_E$ and $s_E$  errors  as well as interval of confidence for mean error in Example \ref{Ex1} with $m=16$ and $50$ iterations.}
	\centering
	\begin{tabular}{l c c rr}
		\hline
		t & $\bar{x}_E$ &$s_E$ &
		\multicolumn{2}{c}{\underline{95\% interval of confidence for error mean }}\\[1ex]
		& & &Lower &Upper\\
		\hline
		0.1 &$0.0073\times10^{-5}$ &$0.0137\times10^{-5}$ &$0.0348\times10^{-6}$ &$0.0111\times10^{-5}$\\
		0.3 &$0.0330\times10^{-5}$ &$0.1151\times10^{-5}$ &$0.0109\times10^{-6}$ &$0.0649\times10^{-5}$\\
		0.5 &$0.0900\times10^{-5}$ &$0.3352\times10^{-5}$ &$0.0290\times10^{-6}$ &$0.1829\times10^{-5}$\\
		0.7 &$0.1402\times10^{-5}$ &$0.4698\times10^{-5}$ &$0.0995\times10^{-6}$ &$0.2704\times10^{-5}$\\
		0.9 &$0.1939\times10^{-5}$ &$0.6806\times10^{-5}$ &$0.0519\times10^{-6}$ &$0.3825\times10^{-5}$\\
		\hline
		
	\end{tabular}
\label{Tab1}
\end{table}
\begin{figure}[H]
	\includegraphics[width=0.99\textwidth]{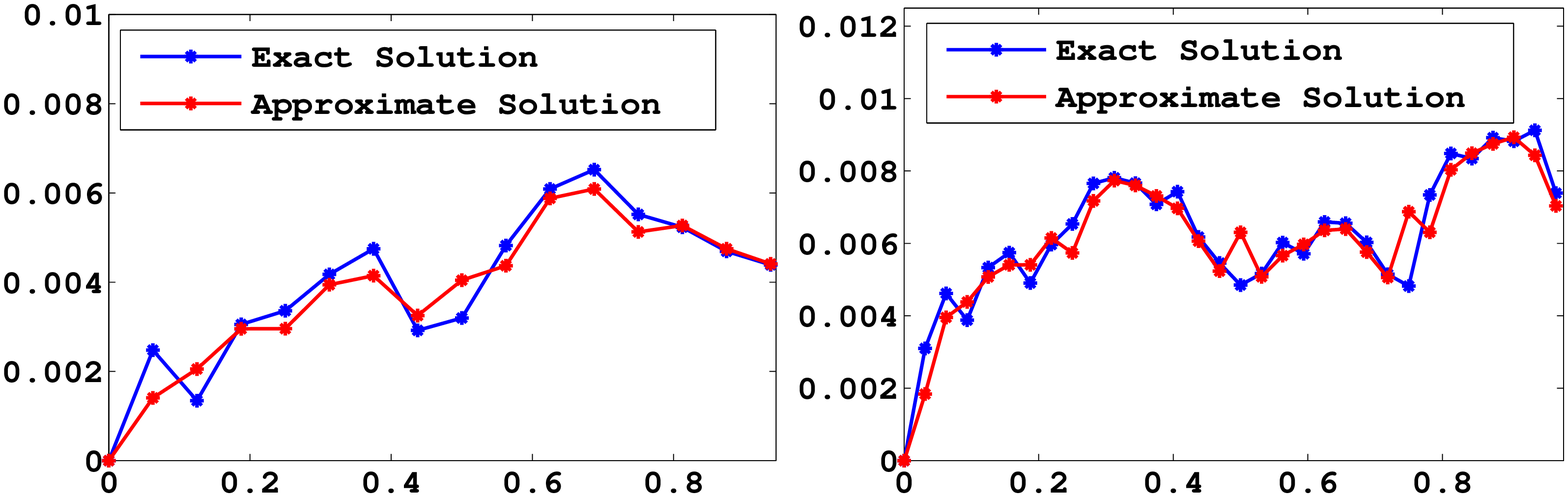}
	\caption{The numerical and exact solutions of Example \ref{Ex1} with $m=32$ and $m=64$. }
	\label{fig1}
\end{figure}
\begin{figure}[H]
	\centering
	\includegraphics[width=1\textwidth]{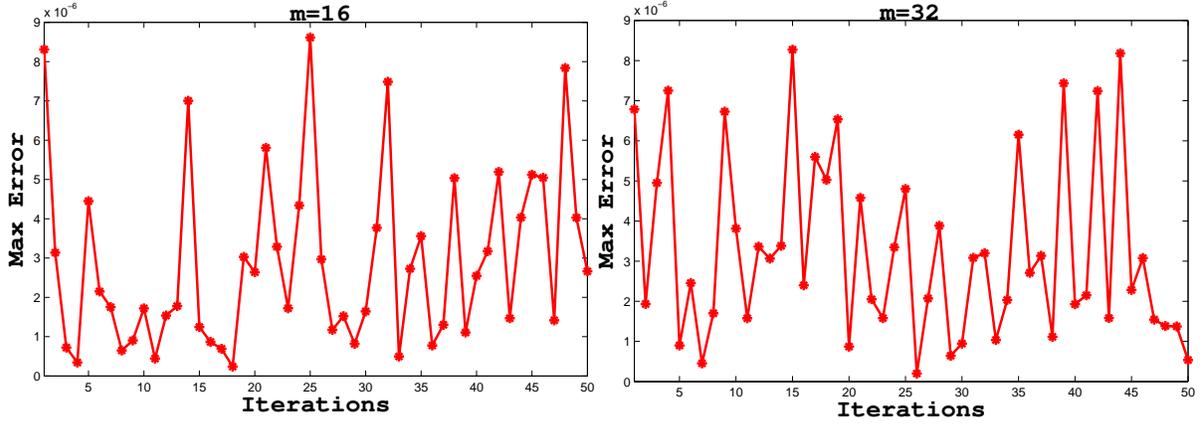}
	\caption{The behavior of error solutions in Example \ref{Ex1} with $m=16$,$m=32$, and $n=50$}
\label{fig2} \end{figure}

\begin{table}[H]
	\caption{ $\bar{x}_E$ and $s_E$  errors  as well as interval of confidence for mean error in Example \ref{Ex1} with $m=32$ and $50$ iterations.}
	\centering
	\begin{tabular}{l c c rr}
		\hline
		t & $\bar{x}_E$ &$s_E$ &
		\multicolumn{2}{c}{\underline{95\% interval of confidence for error mean }}\\[1ex]
		& & &Lower &Upper\\
		\hline
		0.1 &$0.0192\times10^{-5}$ &$0.0223\times10^{-5}$ &$0.0130\times10^{-5}$ &$0.0254\times10^{-5}$\\
		0.3 &$0.0809\times10^{-5}$ &$0.1477\times10^{-5}$ &$0.0400\times10^{-5}$ &$0.1219\times10^{-5}$\\
		0.5 &$0.1577\times10^{-5}$ &$0.3446\times10^{-5}$ &$0.0621\times10^{-5}$ &$0.2532\times10^{-5}$\\
		0.7 &$0.2262\times10^{-5}$ &$0.4336\times10^{-5}$ &$0.1061\times10^{-5}$ &$0.3464\times10^{-5}$\\
		0.9 &$0.2992\times10^{-5}$ &$0.5677\times10^{-5}$ &$0.1418\times10^{-5}$ &$0.4565\times10^{-5}$\\
		\hline
		
	\end{tabular}\label{Tab2}
\end{table}

\begin{example}\normalfont \cite{Kloeden}\label{Ex2}
	Consider the NLSVIE
	\begin{equation}\label{NLSIE}
		x(t)=x_0-a^2\int_{0}^{t}x(s)(1-x^2(s)){\rm d}s+a\int_{0}^{t}(1-x^2(s)){\rm d}B(s)
	\end{equation}
	where, $x_0=\frac{1}{10}$ and $a=\frac{1}{30}$ and $x(t)$ represents the unknown stochastic process based on the same probability space $(\Omega,F,P)$, and $B(t)$ is a Brownian motion process. The exact solution is given by $x(t)=\tanh(aB(t)+\tanh^{-1}(x_0))$.
\end{example}
Table \ref{Tab3} displays $\bar{x}_E$ and $s_E$  errors  as well as interval of confidence for mean error  with $m=16$ and $50$ iterations.   Table \ref{Tab4} presents $\bar{x}_E$ and $s_E$  errors  as well as interval of confidence for mean error  with $m=32$ and $50$ iterations.
 Figure \ref{fig3} shows  the numerical and exact solutions with $m=16$ and $m=632$.  
\begin{table}[H]
	\caption{ $\bar{x}_E$ and $s_E$  error  as well as interval of confidence for mean error in Example \ref{Ex2} with $m=16$ and $50$ iterations.}
	\centering
	\begin{tabular}{l c c rr}
		\hline
		t & $\bar{x}_E$ &$s_E$ &
		\multicolumn{2}{c}{\underline{95\% interval of confidence for error mean. }}\\[1ex]
		& & &Lower &Upper\\
		\hline
		0.1 &$0.1029\times10^{-4}$ &$0.0045\times10^{-4}$ &$0.1016\times10^{-4}$ &$0.0104\times10^{-3}$\\
		0.3 &$0.3070\times10^{-4}$ &$0.0284\times10^{-4}$ &$0.2992\times10^{-4}$ &$0.0315\times10^{-3}$\\
		0.5 &$0.5373\times10^{-4}$ &$0.0761\times10^{-4}$ &$0.5163\times10^{-4}$ &$0.0558\times10^{-3}$\\
		0.7 &$0.7689\times10^{-4}$ &$0.1193\times10^{-4}$ &$0.7358\times10^{-4}$ &$0.0802\times10^{-3}$\\
		0.9 &$0.9666\times10^{-4}$ &$0.1706\times10^{-4}$ &$0.9193\times10^{-4}$ &$0.1014\times10^{-3}$\\	
		\hline
	\end{tabular}\label{Tab3}
\end{table}
\begin{table}[H]
	\caption{ $\bar{x}_E$ and $s_E$  errors  as well as interval of confidence for mean error in Example \ref{Ex2} with $m=32$ and $50$ iterations.}
	\centering
	\begin{tabular}{l c c rr}
		\hline
		t & $\bar{x}_E$ &$s_E$ &
		\multicolumn{2}{c}{\underline{95\% interval of confidence for error mean}}\\[1ex]
		& & &Lower &Upper\\
		\hline
		0.1 &$0.1184\times10^{-4}$ &$0.0064\times10^{-4}$ &$0.1156\times10^{-4}$ &$0.0121\times10^{-3}$\\
		0.3 &$0.3161\times10^{-4}$ &$0.0281\times10^{-4}$ &$0.3038\times10^{-4}$ &$0.0328\times10^{-3}$\\
		0.5 &$0.5215\times10^{-4}$ &$0.0832\times10^{-4}$ &$0.4850\times10^{-4}$ &$0.0557\times10^{-3}$\\
		0.7 &$0.7284\times10^{-4}$ &$0.1288\times10^{-4}$ &$0.6719\times10^{-4}$ &$0.0784\times10^{-3}$\\
		0.9 &$0.9216\times10^{-4}$ &$0.1893\times10^{-4}$ &$0.8387\times10^{-4}$ &$0.1004\times10^{-3}$\\	
		\hline
		
	\end{tabular}\label{Tab4}
\end{table}
\begin{figure}[H]
	\hspace*{1cm}\includegraphics[width=1\textwidth]{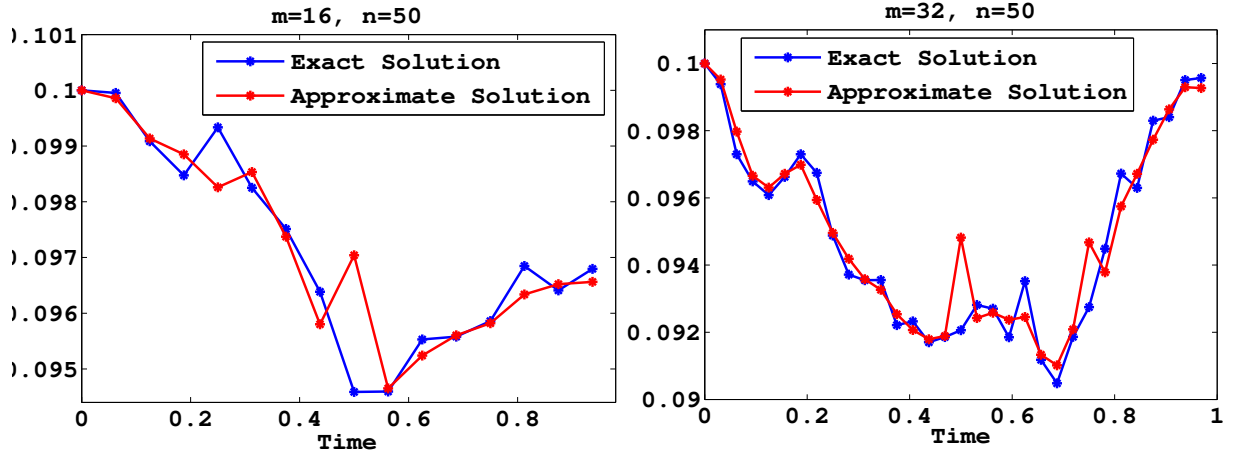}
	\caption{The numerical and exact solutions of Example \ref{Ex2} with $m=16$ and $m=32$. }
	\label{fig3}
\end{figure}

\section{Conclusion} \label{sect6}
\noindent This paper implemented the proposed numerical approach based on the Walsh function to solve the integral equation, which transforms the problem into a set of algebraic equations. These equations are then solved to obtain an approximation of the solution. Through convergence and error analysis, the method's order of convergence is found to be $\mathcal{O}(h)$. The method was applied to numerical examples with known exact solutions in the final section, and the results are presented in tables and figures. These results demonstrated that the method is highly accurate and efficient, with high precision achievable using a limited number of basis functions. Increasing the number of basis functions reduces absolute errors. Additionally, the method can be adapted to solve non-linear stochastic integral equations with fractional Brownian motion.

\section*{References} 
 

\begin{thebibliography}{}
 	\bibitem{Kloeden}
 	P.E. Kloeden, E. Platen, Numerical Solution of Stochastic Differential Equations, in: Applications of Mathematics, Springer-Verlag, Berlin, 1999.
 	
 	\bibitem{Oksendal}
 	B. Oksendal, Stochastic Differential Equations, fifth ed., in: An Introduction with Applications, Springer-Verlag, New York, 1998.
 	\bibitem{Walsh}
 	J. L. Walsh, A closed set of normal orthogonal functions, Amer. J. Math. vol. 55 (1923)
 	pp. 5-24.
 	\bibitem{Ganti}
 	G. Prasada Rao, Piecewise Constant Orthogonal Functions and Their Application to Systems and Control, Springer, Berlin, 1983.
 	\bibitem{Hatamzadeh}		
 	Saeed Hatamzadeh-Varmazyar, Zahra Masouri, Esmail Babolian, Numerical method for solving arbitrary linear differential equations using a set of orthogonal basis functions and operational matrix, Applied Mathematical Modelling 40 (2016) 233–253.
 	\bibitem{Maleknejad}
 	K. Maleknejad, M. Khodabin, M. Rostami, Numerical solution of stochastic Volterra integral equations by a stochastic operational matrix based on block pulse functions,Mathematical and Computer Modelling, Mathematical and Computer Modelling 55 (2012) 791–800.
 	\bibitem{Cheng}
 	C. F. Cheng, Y. Tsay, T. T. Wu, Walsh operational matrices for fractional calculus and their application to distributed systems,Journal of The Franklin Institute-engineering and Applied Mathematics(1997).
 	\bibitem{Golubov}
 	B. Golubov, A. Efimov, V. Skvortsov,	Walsh Series and Transforms Theory and Applications.
 	\bibitem{Asgari}
 	M. Asgari, F. H. Shekarabi, Numerical solution of nonlinear stochastic differential equations using the block pulse operational matrices. Math Sci 7, 47 (2013).
 	 \bibitem{Zeghdane} 
 	 R. Zeghdane, Numerical approach for solving nonlinear stochastic Itô-Volterra integral equations using shifted Legendre polynomials, International Journal of Dynamical Systems and Differential Equations 11.1 (2021): 69-88.
 	  	\bibitem{Jiang}
 	 J. Wu, G. Jiang, X. Sang, Numerical solution of nonlinear stochastic Itô–Volterra integral equations based on Haar wavelets. Adv Differ Equ 2019, 503 (2019).
    \bibitem{Paikaray}
 	 Paikaray P. P., Beuria S., Parida N. Ch., Numerical approximation of p-dimensional stochastic Volterra integral equation using Walsh function. J Math Comput SCI-JM.(2023); 31(4):448--460
 \end{thebibliography}
 \end{document}